\newcommand{\R}[0]{\mathbb R}
\newcommand{\Ds}[0]{\mathcal D}
\newtheorem{Th}{Theorem}[section]
\newtheorem{Lemma}{Lemma}[section]
\begin{document}

\title{On a Lagrangian formulation of the 1D Green-Naghdi system}
\author{H. Inci}

\maketitle

\begin{abstract}
	In this paper we consider the 1D Green-Naghdi system. This system describes the evolution of water waves over a flat bottom in the shallow water regime in terms of the surface height $h$ and the horizontal velocity $u$. We give a Lagrangian formulation of the 1D Green-Naghdi system on a Sobolev type diffeomorphism group. As an application of this formulation we prove local well-posedness for $(h,u)$ in the Sobolev space $(1+H^s(\R)) \times H^{s+1}(\R),\; s > 1/2$. This improves the local well-posedness range for the 1D Green-Naghdi system.
\end{abstract}

\section{Introduction}\label{section_introduction}

The 1D Green-Naghdi system is given by
\begin{align}
	\nonumber
	&u_t + uu_x + h_x=\frac{1}{3h} \partial_x \left(h^3 (u_{tx}  + uu_{xx}  - u_x^2)\right),\quad t \geq 0,\; x \in \R,\\
	\label{gn}
	&h_t+\partial_x (hu) =0,\quad t \geq 0,\;x \in \R,\\
	\nonumber
	&h(t=0)=h_0,\;u(t=0)=u_0,
\end{align}
where $h(t,x) \in \R$ is the height of the upper free surface of the water wave over the flat bottom and $u(t,x) \in \R$ its horizontal velocity. The system \eqref{gn} describes the evolution of water waves over a flat bottom in the shallow water regime, i.e. when the typical wavelength is much larger than the typical water depth.\\
The 2D version of \eqref{gn} for a variable bottom is derived in a paper of Green and Naghdi \cite{gn}. The name ``Green-Naghdi system'' originates from this paper. But the 1D version \eqref{gn} appears already in a paper by Serre \cite{serre} and later in \cite{gardner}. People refer to \eqref{gn} sometimes also as the ``Serre-Green-Naghdi system''.\\ \\
Let us introduce the operator
\begin{equation}\label{Ah}
	A_h: u \mapsto 3hu- \partial_x (h^3 u_x).
\end{equation}
Then a simple calculation shows that the first equation in \eqref{gn} is equivalent to
\[
	A_h(u_t + uu_x)=-3h h_x-2 \partial_x (h^3 u_x^2).
\]
If $A_h$ is invertible one can write \eqref{gn} in non-local form (see also \cite{lwp}) as
\begin{align}
	\nonumber
	&u_t + uu_x =-A_h^{-1}\left(3h h_x+2 \partial_x (h^3 u_x^2)\right),\quad t \geq 0,\; x \in \R,\\
	\label{gn_nonlocal}
	&h_t+\partial_x (hu) =0,\quad t \geq 0,\;x \in \R,\\
	\nonumber
	&h(t=0)=h_0,\;u(t=0)=u_0.
\end{align}
The Green-Naghdi system in the form \eqref{gn_nonlocal} is the starting point for the Lagrangian formulation. More precisely, we want to express \eqref{gn_nonlocal} in the Lagrangian variable $\varphi$, i.e. in terms of the flow map $\varphi$ of $u$. Recall that the flow map is defined as
\[
	\varphi_t(t,x)=u(t,\varphi(t,x)), \; \varphi(0,x)=x,\; t \geq 0,\;x \in \R.
\]
Note that this defines for each $t \geq 0$ a diffeomorphism $\varphi(t):=\varphi(t,\cdot)$ of $\R$. Using the second equation in \eqref{gn_nonlocal} we easily see that
\[
	\frac{d}{dt} \left(\varphi_x(t) \cdot h(t) \circ \varphi(t) \right)=0.
\]
In other words we can write the second equation in \eqref{gn_nonlocal} in the Lagrangian variable $\varphi$ as
\begin{equation}\label{h}
	h(t)=\left(\frac{h_0}{\varphi_x(t)}\right) \circ \varphi(t)^{-1},\;t \geq 0.
\end{equation}
To write the first equation of \eqref{gn_nonlocal} in the Lagrangian variable $\varphi$ consider
\[
	\frac{d}{dt} \varphi_t(t)=\frac{d}{dt} u(t) \circ \varphi(t) = \left(u_t(t)+u(t) u_x(t)\right) \circ \varphi(t). 
\]
If we now replace $u_t+u u_x$ by the corresponding expression from \eqref{gn_nonlocal} and use theirin $u(t)=\varphi_t(t) \circ \varphi(t)^{-1}$ and from \eqref{h} the identity $h(t)=(u_0/\varphi_x(t)) \circ \varphi(t)^{-1}$ we end up with a second order initial value problem
\begin{equation}\label{ode}
	\varphi_{tt}=F(\varphi,\varphi_t,h_0),\; t \geq 0,\;\varphi(0)=\text{id},\;\varphi_t(0)=u_0,
\end{equation}
where $\text{id}:\R \to \R,\;x \mapsto x$, is the identity map on $\R$. To put \eqref{ode} into a proper mathematical framework we need the right functional space for $\varphi$ and the smoothness of $F(\varphi,\varphi_t,h_0)$.\\
The functional space for the Green-Naghdi system \eqref{gn_nonlocal} we have in mind in this paper are the Sobolev spaces. Recall that for $s \geq 0$ the Sobolev space $H^s(\R)$ is defined as
\[
	H^s(\R)=\{f \in L^2(\R) \;|\; \|f\|_{H^s} < \infty \},
\]
where the norm $\|\cdot\|_{H^s}$ is given by
\[
	\|f\|_{H^s}=\left(\int_\R (1+|\xi|^2)^s |\hat f(\xi)|^2 \;d\xi\right)^{1/2}.
\]
Here we denote by $\hat f$ the Fourier transform of $f$. Sobolev spaces of negative order are defined as dual spaces
\[
	H^{-s}(\R)=\left(H^s(\R)\right)',\; s \geq 0.
\]
Suppose now $s > 1/2$. The height $h$ of the free upper surface is an $H^s$ perturbation of the equilibrium height $\bar h \equiv 1$. We take as state space for $h$
\begin{equation}\label{Us}
	U^s=\{h:\R \to \R \;|\; h-1 \in H^s(\R), h(x) > 0 \text{ for all } x \in \R\}.
\end{equation}
By the Sobolev Imbedding Theorem we know that $H^s(\R)$ can be embedded into $C_0(\R)$, the space of continuous functions on $\R$ vanishing at infinity. Thus for $h \in U^s$ we conclude $\inf_{x \in \R} h(x) > 0$, which means that $U^s-1$ is an open subset of $H^s(\R)$. So $U^s$ has naturally a differential structure.  Thus to speak about 
\[
	(h,u) \in C([0,T];U^s \times H^{s+1}(\R)),\; T > 0,
\]
makes sense. In \cite{composition} the authors studied for $s'=s+1 > 3/2$ the functional space
\[
	\Ds^{s'}(\R)=\{\varphi:\R \to \R \;|\; \varphi-\text{id} \in H^{s'}(\R),\; \varphi_x(x) > 0 \text{ for all } x \in \R \}.
\]
By the Sobolev Imbedding $H^{s'}(\R) \hookrightarrow C^1_0(\R)$ one gets that $\Ds^{s'}(\R)$ consists of $C^1$ diffeomorphisms of $\R$ and that $\Ds^{s'}(\R)-\text{id}$ is an open subset of $H^{s'}(\R)$. So $\Ds^{s'}(\R)$ has naturally a differential structure. Moreover, as was shown in \cite{composition}, the maps
\[
	H^\sigma(\R) \times \Ds^{s'}(\R) \to H^\sigma(\R),\;(f,\varphi) \mapsto f \circ \varphi,\; 0 \leq \sigma \leq s',
\]
and
\[
	\Ds^{s'}(\R) \to \Ds^{s'}(\R),\; \varphi \mapsto \varphi^{-1},
\]
are continuous. In particular $\Ds^{s'}(\R)$ is a topological group when the group operation is composition of maps. Now suppose that $(h,u)$ is a solution to the Green-Naghdi system \eqref{gn_nonlocal} on $[0,T]$ with
\[
	(h,u) \in C([0,T];U^s \times H^{s+1}(\R)).
\]
In \cite{lagrangian} it was shown that for $s'=s+1$ there is a unique 
\[
	\varphi \in C^1([0,T];\Ds^{s'}(\R))
\]
satisfying $\varphi_t(t)=u(t) \circ \varphi(t),\;0 \leq t \leq T,\;\varphi(0)=\text{id}$. Thus $\Ds^{s'}(\R)$ is the right functional space for the Lagrangian variable $\varphi$. The first main result of this paper reads then as

\begin{Th}\label{th_analytic}
Let $s > 1/2$. Then
	\[
		\Ds^{s+1}(\R) \times H^{s+1}(\R) \times U^s \to H^{s+1}(\R),\;(\varphi,v,h_0) \mapsto F(\varphi,v,h_0)
	\]
is real analytic. Here $F$ is the map from \eqref{ode}.
\end{Th}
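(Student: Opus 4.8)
The plan is first to rewrite $F$ in a form that contains no composition with $\varphi^{-1}$: composition with $\varphi^{-1}$ is only continuous on the Sobolev diffeomorphism group, so it must disappear before analyticity can even be asked for. From \eqref{gn_nonlocal} the map is
\[
  F(\varphi,v,h_0)=-\bigl[A_h^{-1}\bigl(3hh_x+2\partial_x(h^3u_x^2)\bigr)\bigr]\circ\varphi ,
\]
with $u=v\circ\varphi^{-1}$ and, by \eqref{h}, $h=(h_0/\varphi_x)\circ\varphi^{-1}$. Writing $R_\varphi f=f\circ\varphi$ and $R_\varphi A_h^{-1}=\bigl(R_\varphi A_hR_\varphi^{-1}\bigr)^{-1}R_\varphi$, the chain rule together with $h\circ\varphi=h_0/\varphi_x$ and $u_x\circ\varphi=v_x/\varphi_x$ turns this into $F(\varphi,v,h_0)=-\tilde A_{\varphi,h_0}^{-1}G(\varphi,v,h_0)$ with the \emph{explicit}, $\varphi^{-1}$-free expressions
\[
  \tilde A_{\varphi,h_0}W=\frac{3h_0}{\varphi_x}W-\frac1{\varphi_x}\partial_x\!\Bigl(\frac{h_0^3}{\varphi_x^4}W_x\Bigr),\qquad
  G(\varphi,v,h_0)=\frac{3}{2\varphi_x}\partial_x\!\Bigl(\frac{h_0^2}{\varphi_x^2}\Bigr)+\frac{2}{\varphi_x}\partial_x\!\Bigl(\frac{h_0^3v_x^2}{\varphi_x^5}\Bigr).
\]
Making this reduction rigorous — interpreting every composition on the correct Sobolev space, e.g.\ by working first on a dense set of smooth functions and passing to the limit — is the first step; note that $\tilde A_{\varphi,h_0}^{-1}$ gains two derivatives, which is exactly what is needed for $F$ to land in $H^{s+1}(\R)$.

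Next I would assemble the (standard) analyticity building blocks: $H^s(\R)$ is a Banach algebra and multiplies $H^\sigma(\R)$ into itself for $|\sigma|\le s$ when $s>1/2$, so multiplication is bounded bilinear, hence real analytic; $f\mapsto1/f$ is real analytic on $\{1+g:g\in H^s(\R),\ \inf(1+g)>0\}$ (inversion in the unital algebra $\R\oplus H^s(\R)$); $\partial_x$ is bounded linear $H^\sigma\to H^{\sigma-1}$; and $\varphi\mapsto\varphi_x$ is bounded affine $\Ds^{s+1}(\R)\to 1+H^s(\R)$. From these the coefficients $1/\varphi_x,\ 3h_0/\varphi_x,\ h_0^2/\varphi_x^2,\ h_0^3/\varphi_x^4,\ h_0^3/\varphi_x^5$ are real analytic functions of $(\varphi,h_0)$ with values in $\R\oplus H^s(\R)$, and feeding these, together with $v\mapsto v_x^2$ (real analytic $H^{s+1}\to H^s$), into the formula for $G$ shows that $(\varphi,v,h_0)\mapsto G$ is real analytic into $H^{s-1}(\R)$. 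Likewise, writing $\tilde A_{\varphi,h_0}=M_{3h_0/\varphi_x}-M_{1/\varphi_x}\partial_xM_{h_0^3/\varphi_x^4}\partial_x$ (with $M_c$ denoting multiplication by $c$) and using that $c\mapsto M_c$ is bounded linear $\R\oplus H^s\to\mathcal L\bigl(H^{s+1}(\R),H^{s-1}(\R)\bigr)$ and that $(a,b)\mapsto M_a\partial_xM_b\partial_x$ is bounded bilinear into the same space, one gets that $(\varphi,h_0)\mapsto\tilde A_{\varphi,h_0}$ is real analytic into $\mathcal L\bigl(H^{s+1}(\R),H^{s-1}(\R)\bigr)$.

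The remaining, and main, point is that $\tilde A_{\varphi,h_0}\colon H^{s+1}(\R)\to H^{s-1}(\R)$ is a topological isomorphism for every $(\varphi,h_0)$. Since $1/\varphi_x\in1+H^s(\R)$ is bounded away from $0$, multiplication by it is an isomorphism of $H^{s-1}(\R)$, so it suffices to treat $\hat A_{\varphi,h_0}W:=\varphi_x\tilde A_{\varphi,h_0}W=3h_0W-\partial_x\!\bigl((h_0^3/\varphi_x^4)W_x\bigr)$, a formally self-adjoint, uniformly elliptic second order operator whose bilinear form $\int\bigl(3h_0vw+(h_0^3/\varphi_x^4)v_xw_x\bigr)$ is bounded and coercive on $H^1(\R)$ (using $\inf h_0>0$ and $\sup\varphi_x<\infty$); Lax--Milgram then gives that $\hat A_{\varphi,h_0}\colon H^1(\R)\to H^{-1}(\R)$ is an isomorphism, and a standard elliptic regularity bootstrap — carried out with the sharp $1$D Sobolev product estimates, which is where the low-regularity range $1/2<s<1$ (so that $H^{s-1}(\R)$ has negative order) has to be watched — upgrades it to an isomorphism $H^{s+1}(\R)\to H^{s-1}(\R)$. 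Once invertibility holds for all parameters, real analyticity of operator inversion on the open set of isomorphisms makes $(\varphi,h_0)\mapsto\tilde A_{\varphi,h_0}^{-1}$ real analytic into $\mathcal L\bigl(H^{s-1}(\R),H^{s+1}(\R)\bigr)$; composing with the analytic map $(\varphi,v,h_0)\mapsto G$ and with the bounded bilinear evaluation $\mathcal L(H^{s-1},H^{s+1})\times H^{s-1}\to H^{s+1}$ yields that $F=-\tilde A^{-1}G$ is real analytic, which is the assertion. I expect the elliptic isomorphism/bootstrap step to be the main obstacle; the conjugation identity $F=-\tilde A^{-1}G$, though essentially a computation, is the other delicate point, since it is precisely what eliminates $\varphi^{-1}$ and thereby makes the analyticity statement possible.
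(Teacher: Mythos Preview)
Your proposal is correct and follows essentially the same route as the paper: conjugate by $R_\varphi$ to eliminate $\varphi^{-1}$ via the chain-rule identity $R_\varphi\partial_xR_\varphi^{-1}=M_{\varphi_x}^{-1}\partial_x$, show that the conjugated operator $\tilde A_{\varphi,h_0}=R_\varphi A_hR_\varphi^{-1}$ and the forcing $G$ depend analytically on $(\varphi,v,h_0)$ from the standard building blocks (multiplication, $\partial_x$, inversion in $\R\oplus H^s$), and then invoke analyticity of operator inversion. The only organizational difference is that the paper proves the isomorphism property for the unconjugated $A_h$ separately (Lemma~\ref{lemma_s1}) and deduces it for $R_\varphi A_hR_\varphi^{-1}$ by conjugation, whereas you argue directly for $\hat A_{\varphi,h_0}$---but the underlying Lax--Milgram/Riesz plus elliptic bootstrap is identical.
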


For the basics of analyticity in Banach spaces we refer to \cite{lagrangian}. Using Theorem \ref{th_analytic} and the Picard-Lindel\"of Theorem we get for every $h_0 \in U^s$ and $u_0 \in H^{s+1}(\R)$ a unique local solution $\varphi$ to \eqref{ode} on some time interval $[0,T]$. By defining now
\[
	h(t)=\left(\frac{h_0}{\varphi_x(t)}\right) \circ \varphi(t)^{-1},\;u(t)=\varphi_t(t) \circ \varphi(t)^{-1},\; 0 \leq t \leq T,
\]
we get a solution $(h,u) \in C([0,T];U^s \times H^{s+1}(\R))$ to \eqref{gn_nonlocal}. With this the second main result of the paper reads as

\begin{Th}\label{th_lwp}
Let $s > 1/2$. Then the 1D Green-Naghdi system \eqref{gn_nonlocal} is locally well-posed for $(h,u)$ in $U^s \times H^{s+1}(\R)$.
\end{Th}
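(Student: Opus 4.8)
\emph{Plan.} The idea is to push the Lagrangian ODE \eqref{ode} forward to the Eulerian variables, using Theorem \ref{th_analytic} together with the composition calculus of \cite{composition} and \cite{lagrangian}. First I would record that, by Theorem \ref{th_analytic}, the map $F$ is real analytic on the open subset $\Ds^{s+1}(\R)\times H^{s+1}(\R)\times U^s$ of the Banach space $H^{s+1}(\R)\times H^{s+1}(\R)\times H^s(\R)$; in particular it is $C^1$ and locally Lipschitz. Fixing $h_0\in U^s$ as a parameter, rewrite \eqref{ode} as the first order system $\partial_t(\varphi,v)=\bigl(v,F(\varphi,v,h_0)\bigr)$ on the open set $\Ds^{s+1}(\R)\times H^{s+1}(\R)$. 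The Picard--Lindel\"of theorem in Banach spaces then gives a time $T=T(u_0,h_0)>0$ and a unique solution $\varphi\in C^2\bigl([0,T];\Ds^{s+1}(\R)\bigr)$ with $\varphi(0)=\id$, $\varphi_t(0)=u_0$. Since $F$ is analytic, the standard flow theory also yields that $\varphi$ depends continuously (in fact real analytically) on $u_0$ and on the parameter $h_0$, and that $T$ may be chosen uniform on a neighborhood of $(u_0,h_0)$ in $H^{s+1}(\R)\times U^s$.

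Next I would set $h(t)=\bigl(h_0/\varphi_x(t)\bigr)\circ\varphi(t)^{-1}$ and $u(t)=\varphi_t(t)\circ\varphi(t)^{-1}$ for $0\le t\le T$. Because $\varphi(t)\in\Ds^{s+1}(\R)$ we have $\varphi_x(t)-1\in H^s(\R)$ with $\inf_{x}\varphi_x(t)>0$, so $h_0/\varphi_x(t)-1\in H^s(\R)$ remains strictly positive; feeding this through the continuity of the composition map $H^s(\R)\times\Ds^{s+1}(\R)\to H^s(\R)$ (note $s\le s+1$) and of the inversion map $\varphi\mapsto\varphi^{-1}$ from \cite{composition} gives $h\in C\bigl([0,T];U^s\bigr)$, and likewise $u\in C\bigl([0,T];H^{s+1}(\R)\bigr)$. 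Reversing the purely algebraic computation that led from \eqref{gn_nonlocal} to \eqref{ode} --- which only used the identity \eqref{h} and the relation $u=\varphi_t\circ\varphi^{-1}$ --- then shows that this $(h,u)$ solves \eqref{gn_nonlocal} on $[0,T]$ with the prescribed initial data.

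For uniqueness, suppose $(\tilde h,\tilde u)\in C\bigl([0,\tilde T];U^s\times H^{s+1}(\R)\bigr)$ is another solution with the same data. By the result of \cite{lagrangian} its flow map $\tilde\varphi\in C^1\bigl([0,\tilde T];\Ds^{s+1}(\R)\bigr)$ is well defined, and carrying out the derivation of \eqref{ode} in the forward direction shows that $\tilde\varphi$ solves the \emph{same} initial value problem \eqref{ode}; the uniqueness part of Picard--Lindel\"of forces $\tilde\varphi=\varphi$ on the common interval, and then $\tilde h,\tilde u$ are recovered from $\tilde\varphi$ by exactly the formulas above, so $(\tilde h,\tilde u)=(h,u)$ there. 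For continuous dependence, if $(h_0^{(k)},u_0^{(k)})\to(h_0,u_0)$ in $U^s\times H^{s+1}(\R)$, then for $k$ large the corresponding flows $\varphi^{(k)}$ exist on a common interval $[0,T]$ and converge to $\varphi$ in $C^1\bigl([0,T];\Ds^{s+1}(\R)\bigr)$ by continuous dependence for the ODE; composing with the continuous inversion and composition maps yields $(h^{(k)},u^{(k)})\to(h,u)$ in $C\bigl([0,T];U^s\times H^{s+1}(\R)\bigr)$, which is the asserted local well-posedness.

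The analytic core of the whole argument is Theorem \ref{th_analytic}; once it is available, what remains is essentially bookkeeping with the composition calculus. The step I expect to require the most care is verifying that the Eulerian pair $(h,u)$ reconstructed from the Lagrangian solution has \emph{exactly} the claimed regularity and depends continuously on the data --- in particular that no derivative is lost when passing $h=(h_0/\varphi_x)\circ\varphi^{-1}$ through the composition map at the borderline index $\sigma=s$ with $\varphi\in\Ds^{s+1}(\R)$, and that the existence time can be taken uniform under small perturbations of $(h_0,u_0)$. This is precisely where the openness of $U^s-1$ and of $\Ds^{s+1}(\R)-\id$, together with the quantitative Lipschitz bounds on $F$ implicit in its analyticity, get used.
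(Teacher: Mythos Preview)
Your proposal is correct and follows essentially the same route as the paper: Picard--Lindel\"of on the analytic ODE \eqref{ode} for existence and continuous dependence, reconstruction of $(h,u)$ via the composition calculus of \cite{composition}, and uniqueness by pulling an arbitrary Eulerian solution back to its flow map via \cite{lagrangian} and invoking ODE uniqueness. The one place where the paper adds detail beyond your outline is a short approximation lemma (Lemma~\ref{lemma_technical}) that justifies differentiating $t\mapsto g(t)\circ\varphi(t)^{-1}$ in $H^{s-1}$ at the low regularity $s>1/2$ --- precisely the step you correctly flag as requiring the most care --- and accordingly the paper states existence and uniqueness in the class $C\bigl([0,T];U^s\times H^{s+1}\bigr)\cap C^1\bigl([0,T];(1+H^{s-1})\times H^s\bigr)$.
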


In \cite{lwp} it was shown that \eqref{gn_nonlocal} is locally well-posed in $U^s \times H^{s+1}(\R),\; s > 3/2$. Theorem \ref{th_lwp} improves this result.

\section{The operator $A_h$}\label{section_Ah}

The goal of this section is to prove that for $s > 1/2$ and $h \in U^s$ as in \eqref{Us} the operator $A_h$ in \eqref{Ah} is an isomorphism $A_h:H^{s+1}(\R) \to H^{s-1}(\R),\;u \mapsto 3h u - \partial_x(h^3 u_x)$. To do that consider the following inner product on $H^1(\R)$
\[
	\langle u,v \rangle_h = \int_\R 3h u v + h^3 u_x v_x \;dx.
\]
Since $\|h\|_{L^\infty} < \infty$ and $\inf_{x \in \R} h(x) > 0$ we easily see that $\langle \cdot,\cdot \rangle_h$ is equivalent to the $H^1$ inner product
\[
	\langle u,v \rangle_{H^1}=\int_\R u v + u_x v_x \;dx. 
\]

\begin{Lemma}\label{lemma_s1}
Let $s > 1/2$ and $h \in U^s$. Then
	\[
		A_h:H^{s+1}(\R) \to H^{s-1}(\R),\;u \mapsto 3h u - \partial_x(h^3 u_x),
	\]
is an isomorphism.
\end{Lemma}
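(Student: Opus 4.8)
The plan is to establish the isomorphism by the Lax–Milgram theorem combined with elliptic regularity. First I would treat the case $s=1$ (equivalently the $H^2 \to L^2$ mapping), and then bootstrap to general $s>1/2$.

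For the base case, I would work with the bilinear form $\langle u,v\rangle_h = \int_\R 3huv + h^3 u_x v_x\,dx$ on $H^1(\R)$. As the excerpt already notes, since $\snorm{h}<\infty$, $\snorm{h^3}<\infty$, and $\inf_x h(x)>0$ (consequences of $h \in U^s$ and Sobolev embedding $H^s(\R)\hookrightarrow C_0(\R)$), this form is bounded and coercive, i.e.\ equivalent to the standard $H^1$ inner product. Hence for any $\ell \in (H^1(\R))' = H^{-1}(\R)$ there is a unique $u \in H^1(\R)$ with $\langle u,v\rangle_h = \ell(v)$ for all $v \in H^1(\R)$; integrating by parts identifies this weak equation with $A_h u = 3hu - \p_x(h^3 u_x) = \ell$ in the distributional sense. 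This already gives that $A_h : H^1(\R) \to H^{-1}(\R)$ is an isomorphism. To upgrade, suppose $\ell = f \in H^{s-1}(\R)$ with $s>1/2$ (note $H^{s-1}\subset H^{-1}$ when $s \le 1$, and for $s>1$ one has $H^{s-1}\subset L^2 \subset H^{-1}$). From $h^3 u_x = \p_x^{-1}(3hu - f)$ — more carefully, from $\p_x(h^3 u_x) = 3hu - f$ — I would solve algebraically for $u_x$: since $h^3 \in 1 + H^s(\R)$ is bounded below, $1/h^3 \in 1 + H^s(\R)$ as well (here I use that $H^s(\R)$, $s>1/2$, is a Banach algebra under pointwise multiplication and is closed under composition with real-analytic functions fixing $0$, such as $t \mapsto 1/(1+t)^3 - 1$). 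Then $u_x = \frac{1}{h^3}\,\p_x^{-1}(3hu-f)$; feeding this through a bootstrap argument, using $3hu - f \in H^{\min(s,s-1+1)} = H^s$... — more precisely, writing $g := 3hu-f$, one has $g \in H^{s-1}$ at the starting regularity, so $h^3 u_x$ has an antiderivative relation giving $u_x \in H^{s}$ once $u \in H^{s+1}$ is known, so I would instead set up the fixed-point/regularity loop carefully: knowing $u \in H^1$ and $f \in H^{s-1}$, deduce $g \in H^{\min(s, 1)} \cap H^{s-1}$, hence $u_x = h^{-3}\p_x^{-1} g \in H^{\min(s,1)}$ improving $u$ by a derivative each round until $u \in H^{s+1}$.

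The cleaner route, which I would actually carry out, is: (i) Lax–Milgram gives injectivity, and surjectivity onto $H^{-1}$, of $A_h$ on $H^1$; (ii) for the range statement, given $f \in H^{s-1}(\R)$ let $u \in H^1(\R)$ be the weak solution; then the distributional identity $\p_x(h^3 u_x) = 3hu - f$ shows $\p_x(h^3 u_x) \in L^2 + H^{s-1} = H^{\min(0,s-1)}$, and since $h^3 u_x \in L^2$ we get $h^3 u_x \in H^{1+\min(0,s-1)}$, hence $u_x = h^{-3}(h^3 u_x) \in H^{1+\min(0,s-1)}$ by the algebra property (using $1/h^3 \in 1+H^s$, $s>1/2$), so $u \in H^{2+\min(0,s-1)} = H^{\min(2,s+1)}$; iterating this gain-of-regularity step finitely many times lifts $u$ all the way to $H^{s+1}(\R)$. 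Boundedness of $A_h : H^{s+1}(\R)\to H^{s-1}(\R)$ is clear from the algebra property of $H^s$, and injectivity on $H^{s+1}$ follows from injectivity on the larger space $H^1$; surjectivity is exactly the regularity statement just proved. The open mapping theorem then yields that $A_h$ is an isomorphism.

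The main obstacle I anticipate is the bookkeeping in the bootstrap when $1/2 < s < 1$: there $H^{s-1}$ is a genuine negative-order space, $u_x$ starts only in $L^2$, and one must be careful that the multiplication $H^s \times H^{s-1} \to H^{s-1}$ (needed to make sense of $3hu$ and of $h^{-3}\cdot(\text{something in }H^{s-1})$ in the low-regularity rounds) is well defined and continuous — this is a standard but slightly delicate product estimate in negative Sobolev spaces valid precisely because $s>1/2$. Once that product rule is in hand, each iteration gains one full derivative and the process terminates after $\lceil s \rceil$ or so steps at $H^{s+1}$, and equivalence of $\langle\cdot,\cdot\rangle_h$ with $\langle\cdot,\cdot\rangle_{H^1}$ together with the open mapping theorem finishes the proof.
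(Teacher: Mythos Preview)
Your proposal is correct and follows essentially the same route as the paper: Riesz/Lax--Milgram on $H^1$ via the equivalent inner product $\langle\cdot,\cdot\rangle_h$ to get a weak solution, then the elliptic bootstrap from $\partial_x(h^3 u_x)=3hu-f$ together with the fact that division by $h^3$ is bounded on $H^s$ to lift $u$ into $H^{s+1}$, iterating for larger $s$. The only minor difference is organizational---the paper slices the range into $1/2<s\le 2$, $2<s\le 3$, \ldots\ so that each step stays at nonnegative Sobolev regularity, which in particular means the negative-order product estimate you flag as a potential obstacle is never actually needed here.
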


\begin{proof}
In the following we will use $\langle \cdot,\cdot \rangle$ for the duality pairing between $H^{\sigma}(\R)$ and $H^{-\sigma}(\R)$. Suppose first $1/2 < s \leq 2$ and let $f \in H^{s-1}(\R)$. We have $f \in H^{-1}(\R) =\left(H^1(\R)\right)'$. By the Riesz Representation Theorem there is a unique $u \in H^1(\R)$ s.t.
	\[
		\langle u,\phi \rangle_h=\langle f,\phi \rangle
	\]
for all test functions $\phi \in C_c^\infty(\R)$. We can write this in $H^{-1}(\R)$ as
	\[
		3hu-\partial_x(h^3 u_x) = f.
	\]
	Thus 
	\[
		\partial_x(h^3 u_x)=3hu-f. 
\]
	Since we have by assumption $s-1 \leq 1$ the right hand side is in $H^{s-1}(\R)$. As $h^3 u_x \in L^2(\R)$ and $\partial_x (h^3 u_x) \in H^{s-1}(\R)$ we conclude $h^3 u_x \in H^s(\R)$. From \cite{composition} we know that dividing by $h^3$ is a bounded linear map $H^s(\R) \to H^s(\R)$. We therefore have $u_x \in H^s(\R)$ and with that $u \in H^{s+1}(\R)$ satisfying $A_h(u)=f$. So we've proved that for $1/2 < s \leq 2$
\[
	A_h:H^{s+1}(\R) \to H^{s-1}(\R)
\]
	is an isomorphism. Suppose now $2 < s \leq 3$ and $f \in H^{s-1}(\R)$. The previous step shows $u \in H^3(\R)$ and hence $3hu-f \in H^{s-1}(\R)$. Arguing as before we conclude $u \in H^{s+1}(\R)$. Continuing like that for $3 < s \leq 4,\;4 < s \leq 5,\ldots$ shows that
\[
	A_h:H^{s+1}(\R) \to H^{s-1}(\R),\;s > 1/2,
\]
is an isomorphism.
\end{proof}

\section{Lagrangian formulation}\label{section_lagrangian}

The goal of this section is to prove Theorem \ref{th_analytic}. Let us start by introducing some notation.\\
Let $s > 1/2$ and $0 \leq \sigma \leq s$. Then multiplication
\[
	H^s(\R) \times H^\sigma(\R) \to H^\sigma(\R),\;(f,g) \mapsto f \cdot g,
\]
is continuous -- see \cite{composition}. For $1/2 < s < 1$ multiplication extends for $s-1 \leq \sigma < 0$ to a continuous bilinear map
\[
	H^s(\R) \times H^\sigma(\R) \to H^\sigma(\R),\;(f,g) \mapsto f \cdot g.
\]
This follows from the fact that there is a constant $C > 0$ s.t.
\[
	\left|\int_\R f \cdot g \cdot \phi \;dx \right| \leq \|g\|_\sigma \|f \cdot \phi\|_{-\sigma} \leq C \|f\|_s \|g\|_\sigma \|\phi\|_{-\sigma},
\]
for all $\phi \in C_c^\infty(\R)$, where we used $-\sigma < s$. In particular we have for $\varphi \in \Ds^{s+1}(\R)$ a well-defined multiplication operator 
\[
	M_{\varphi_x}:H^\sigma(\R) \to H^\sigma(\R),\;f \mapsto \varphi_x \cdot f, 
\]
for $\min\{0,s-1\} \leq \sigma \leq s$.
Moreover
\[
	\Ds^{s+1}(\R) \to L(H^\sigma(\R);H^\sigma(\R)),\;\varphi \mapsto M_{\varphi_x},
\]
is affine linear and hence it is analytic. Here we denote by $L(X;Y)$ the space of bounded linear maps from $X$ to $Y$. From \cite{composition} we know that dividing by $\varphi_x$ is a bounded linear map $H^\sigma(\R) \to H^\sigma(\R)$. In other words $M_{\varphi_x}^{-1} \in L(H^\sigma(\R);H^\sigma(\R))$. Using Neumann series we see that inversion of linear maps is an analytic process, hence for $\min\{0,s-1\} \leq \sigma \leq s$ the map
\[
	\Ds^{s+1}(\R) \to L(H^\sigma(\R);H^\sigma(\R)),\;\varphi \mapsto M_{\varphi_x}^{-1}
\]
is analytic. As an immediate consequence we get that the map
\[
	\Ds^{s+1}(\R) \times U^s \to U^s,\; (\varphi,h_0) \mapsto \frac{h_0}{\varphi_x}=M_{\varphi_x}^{-1} h_0,
\]
is analytic.
\\
Let $s > 1/2$ and $\varphi \in \Ds^{s+1}(\R)$. We denote by $R_\varphi:f \mapsto f \circ \varphi$ composition with $\varphi$ from the right. Note that $R_\varphi^{-1}=R_{\varphi^{-1}}$. As mentioned in Section \ref{section_introduction} we know from \cite{composition} that for $0 \leq \sigma \leq s+1$
\[
	R_\varphi:H^{\sigma}(\R) \to H^{\sigma}(\R),\;f \mapsto f \circ \varphi,
\]
is a continuous linear map. If $1/2 < s < 1$ then this extends for $s-1 \leq \sigma < 0$ to a continuous linear map
\[
	R_\varphi:H^\sigma(\R) \to H^\sigma(\R).
\]
The reason is that there is a constant $C > 0$ such that we have for all test functions $\phi \in C_c^\infty(\R)$
\[
	\left|\int_\R f \circ \varphi \cdot \phi \;dx\right| = \left|\int_\R f \cdot \frac{\phi \circ \varphi^{-1}}{\varphi_x \circ \varphi^{-1}} \;dx\right| \leq C \|f\|_\sigma \|\phi\|_{-\sigma}.
\]
This follows from $-\sigma < s$ and the fact that division by $\varphi_x$ and $R_\varphi^{-1}$ are bounded linear maps $H^{-\sigma}(\R) \to H^{-\sigma}(\R)$.\\
The composition map has poor regularity. It is not more than continuous. The reason is that to take the derivative with respect to $\varphi$ in $\varphi \mapsto f \circ \varphi$ we have to take the derivative of $f$, which leads to a loss of derivative. But the conjugation with $R_\varphi^{-1}$ turns out to be smooth.

\begin{Lemma}\label{lemma_conjugation}
Let $s > 1/2$. Then
	\[
		\Ds^{s+1}(\R) \to L(H^{s+1}(\R);H^s(\R)),\;\varphi \mapsto R_\varphi \partial_x R_\varphi^{-1}
	\]
and
	\[
		\Ds^{s+1}(\R) \to L(H^s(\R);H^{s-1}(\R)),\;\varphi \mapsto R_\varphi \partial_x R_\varphi^{-1}
	\]
are analytic.
\end{Lemma}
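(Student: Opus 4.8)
The key observation is the conjugation identity $R_\varphi \partial_x R_\varphi^{-1} = \tfrac{1}{\varphi_x}\,\partial_x$, which should be verified first by a direct chain-rule computation: for a test function $g$, $(R_\varphi^{-1} g) = g\circ\varphi^{-1}$, so $\partial_x(g\circ\varphi^{-1}) = (g_x\circ\varphi^{-1})\cdot(\varphi^{-1})_x = (g_x\circ\varphi^{-1})/(\varphi_x\circ\varphi^{-1})$, and composing with $R_\varphi$ on the left gives $g_x/\varphi_x$. Hence
\[
R_\varphi \partial_x R_\varphi^{-1} = M_{\varphi_x}^{-1}\circ \partial_x,
\]
where $\partial_x$ is the (constant-coefficient, hence trivially analytic in $\varphi$) derivative $H^{s+1}(\R)\to H^s(\R)$ (resp.\ $H^s(\R)\to H^{s-1}(\R)$), and $M_{\varphi_x}^{-1}$ is the inverse multiplication operator. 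The plan is to reduce the claimed analyticity entirely to this factorization.

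Next I would invoke the facts established earlier in Section \ref{section_lagrangian}. For the first map, $\partial_x$ is a fixed bounded linear operator $H^{s+1}(\R)\to H^s(\R)$, and for $\sigma = s$ (which lies in the admissible range $\min\{0,s-1\}\le\sigma\le s$) the map $\varphi\mapsto M_{\varphi_x}^{-1}\in L(H^s(\R);H^s(\R))$ was shown to be analytic via the Neumann series argument. Composition in the target, i.e.\ the bilinear map $L(H^s;H^s)\times L(H^{s+1};H^s)\to L(H^{s+1};H^s)$, $(B,T)\mapsto B\circ T$, is bounded bilinear and therefore (real-)analytic; precomposing the analytic map $\varphi\mapsto M_{\varphi_x}^{-1}$ with this bilinear map and the constant $T=\partial_x$ yields that $\varphi\mapsto M_{\varphi_x}^{-1}\partial_x = R_\varphi\partial_x R_\varphi^{-1}$ is analytic into $L(H^{s+1}(\R);H^s(\R))$. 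For the second map, the same reasoning applies with $\partial_x: H^s(\R)\to H^{s-1}(\R)$ and $\varphi\mapsto M_{\varphi_x}^{-1}\in L(H^{s-1}(\R);H^{s-1}(\R))$; here one must check that $\sigma = s-1$ is still in the admissible range $\min\{0,s-1\}\le \sigma\le s$, which is exactly $s-1\ge\min\{0,s-1\}$ — true — so the earlier analyticity statement for $M_{\varphi_x}^{-1}$ does apply at this regularity.

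The only genuine subtlety — the place I would be most careful — is the low-regularity regime $1/2 < s < 1$, where $s-1 < 0$ and the operators $\partial_x: H^s\to H^{s-1}$ and $M_{\varphi_x}^{-1}$ on $H^{s-1}$ act on a negative-order Sobolev space. Here I would rely on the extensions discussed just before the lemma: $\partial_x$ maps $H^s(\R)\to H^{s-1}(\R)$ boundedly for all $s$ (this is immediate from the Fourier definition, interpreting $H^{s-1}$ as a dual space when $s<1$), and multiplication by $\varphi_x\in 1+H^{s}(\R)$ — together with division by $\varphi_x$ — was shown to extend to bounded operators on $H^\sigma(\R)$ for $s-1\le\sigma<0$, so the Neumann-series analyticity of $\varphi\mapsto M_{\varphi_x}^{-1}$ on $H^{s-1}(\R)$ goes through. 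Modulo this bookkeeping, the argument is uniform in $s>1/2$: in both cases the claimed map is the composition of the constant operator $\partial_x$ with the analytic family $M_{\varphi_x}^{-1}$, hence analytic. \finishproof
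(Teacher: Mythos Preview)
Your proof is correct and follows exactly the same route as the paper: establish the chain-rule identity $R_\varphi \partial_x R_\varphi^{-1} = M_{\varphi_x}^{-1}\partial_x$ and then invoke the analyticity of $\varphi \mapsto M_{\varphi_x}^{-1}$ already set up in the preceding paragraphs. Your write-up is simply more explicit about the admissible range of $\sigma$ and the low-regularity case $1/2<s<1$, which the paper compresses into the phrase ``by the above considerations.''
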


\begin{proof}
Using the chain rule we have
	\[
		\left(\partial_x(f \circ \varphi^{-1})\right) \circ \varphi = \frac{\partial_x f}{\varphi_x}.
	\]
Thus $R_\varphi \partial_x R_\varphi^{-1}=M_{\varphi_x}^{-1} \partial_x$, which by the above considerations is analytic in $\varphi$.
\end{proof}

We can now prove Theorem \ref{th_analytic}.

\begin{proof}[Proof of Theorem \ref{th_analytic}]
Let $h_0 \in U^s$. We want to show that
	\begin{align*}
		&\Ds^{s+1}(\R) \times H^{s+1}(\R) \to H^{s+1}(\R),\\
		&(\varphi,v) \mapsto F(\varphi,v,h_0)=-R_\varphi A_{(h_0/\varphi_x)\circ \varphi^{-1}}^{-1} \Big(3(h_0/\varphi_x)\circ \varphi^{-1} \cdot \partial_x \left((h_0/\varphi_x)\circ \varphi^{-1}\right)\\
		&+2\partial_x \left(\left((h_0/\varphi_x) \circ \varphi^{-1}\right)^3 \cdot (\partial_x (v \circ \varphi^{-1}))^2 \right)\Big),
	\end{align*}
	is analytic. We rewrite $F(\varphi,v,h_0)$ as
\begin{align*}
	F(\varphi,v,h_0)= &-R_\varphi A_{(h_0/\varphi_x)\circ \varphi^{-1}}^{-1} R_\varphi^{-1} \Big(3M_{\varphi_x}^{-1} h_0 \cdot R_\varphi \partial_x R_\varphi^{-1} M_{\varphi_x}^{-1} h_0\\
	&+2R_\varphi \partial_x R_\varphi^{-1} \left((M_{\varphi_x}^{-1} h_0)^3 \cdot (R_\varphi \partial_x R_\varphi^{-1} v)^2 \right)\Big).
\end{align*}
Consider first the operator $R_\varphi A_{(h_0/\varphi_x)\circ \varphi^{-1}}^{-1} R_\varphi^{-1}$. We clearly have
	\[
		R_\varphi A_{(h_0/\varphi_x)\circ \varphi^{-1}}^{-1} R_\varphi^{-1}= \left(R_\varphi A_{(h_0/\varphi_x)\circ \varphi^{-1}}R_\varphi^{-1}\right)^{-1}.
	\]
	We have for $f \in H^{s+1}(\R)$
	\[
		R_\varphi A_{(h_0/\varphi_x)\circ \varphi^{-1}}R_\varphi^{-1}(f)=3M_{\varphi_x}^{-1}h_0 \cdot f - R_\varphi \partial_x R_\varphi^{-1} \left( (M_{\varphi_x}^{-1}h_0)^3 \cdot R_\varphi \partial_x R_\varphi^{-1} f\right).
	\]
Thus the map
	\[
		\Ds^{s+1}(\R) \times U^s \to L(H^{s+1}(\R);H^{s-1}(\R)),\;(\varphi,h_0) \mapsto R_\varphi A_{(h_0/\varphi_x)\circ \varphi^{-1}}R_\varphi^{-1}(\cdot),
	\]
is analytic. Since inversion of linear maps is an analytic process we get that
	\[
		\Ds^{s+1}(\R) \times U^s \to L(H^{s-1}(\R);H^{s+1}(\R)),\;(\varphi,h_0) \mapsto R_\varphi A_{(h_0/\varphi_x)\circ \varphi^{-1}}^{-1} R_\varphi^{-1}(\cdot),
		\]
is analytic. We clearly have that
	\[
		\Ds^{s+1}(\R) \times U^s \to H^{s-1}(\R),\; (\varphi,h_0) \mapsto 3M_{\varphi_x}^{-1} h_0 \cdot R_\varphi \partial_x R_\varphi^{-1} M_{\varphi_x}^{-1} h_0,
	\]
and
	\begin{align*}
		&\Ds^{s+1}(\R) \times H^{s+1}(\R) \times U^s \to H^{s-1}(\R),\\
		&(\varphi,v,h_0) \mapsto 2R_\varphi \partial_x R_\varphi^{-1} \left((M_{\varphi_x}^{-1} h_0)^3 \cdot (R_\varphi \partial_x R_\varphi^{-1} v)^2 \right),
	\end{align*}
	are analytic maps. So composing the latter two maps with $R_\varphi A_{(h_0/\varphi_x)\circ \varphi^{-1}}^{-1} R_\varphi^{-1}(\cdot)$ shows that $F(\varphi,v,h_0)$ depends analytically on $(\varphi,v,h_0)$. This finishes the proof.
\end{proof}
Consequently we get by using Theorem \ref{th_analytic} a Lagrangian formulation of the Green-Naghdi system \eqref{gn_nonlocal} in the form of an analytic second order ODE on $\Ds^{s+1}(\R)$ given by \eqref{ode}.

\section{Local well-posedness of the Green-Naghdi system}\label{section_lwp}

The goal of this section is to prove the local well-posedness result stated in Theorem \ref{th_lwp}. We will prove this in two steps: local existence and uniqueness. But before we do that we prove the following technical lemma.

\begin{Lemma}\label{lemma_technical}
	Let $s > 1/2$ and $T > 0$. Suppose $g \in C^1([0,T];U^s)$ and $\varphi \in C^1([0,T];\Ds^{s+1}(\R))$. Then $g \circ \varphi^{-1} \in C^1([0,T];(1+H^{s-1}(\R)))$ with
	\[
		\frac{d}{dt} g(t) \circ \varphi^{-1}(t) = g_t(t) \circ \varphi(t)^{-1}-\left(\frac{g_x(t) \varphi_t(t)}{\varphi_x(t)}\right) \circ \varphi(t)^{-1},\;0 \leq t \leq T.
	\]
\end{Lemma}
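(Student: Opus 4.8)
The plan is to show that $w:=g\circ\varphi^{-1}$ is differentiable in $t$ with the asserted derivative by a difference-quotient argument combined with a density reduction to smooth data, and then to check that this derivative depends continuously on $t$, so that $w$ is $C^1$. Throughout set $\psi(t):=\varphi(t)^{-1}$. By \cite{composition} inversion is continuous on $\Ds^{s+1}(\R)$, so $\psi\in C([0,T];\Ds^{s+1}(\R))$; and by \cite{lagrangian} one has $\psi\in C^1([0,T];\id+H^s(\R))$ with
\[
\dot\psi(t)=-\Big(\frac{\varphi_t(t)}{\varphi_x(t)}\Big)\circ\varphi(t)^{-1}\in H^s(\R),
\]
hence $\psi$ is Lipschitz on $[0,T]$ into $\id+H^s(\R)$, say with constant $L$. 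Fix $t$. Since $\Ds^{s+1}(\R)-\id$ is a convex subset of $H^{s+1}(\R)$ on which $\xi\mapsto\xi_x$ stays positive, for small $\delta$ and $\tau\in[0,1]$ the convex combinations $\psi^\delta_\tau:=(1-\tau)\psi(t)+\tau\psi(t+\delta)$ lie in $\Ds^{s+1}(\R)$ and fill a bounded set there (bounded $H^{s+1}$ norm, derivatives bounded away from zero) shrinking to $\{\psi(t)\}$ as $\delta\to0$; by the estimates of \cite{composition} (using, for $1/2<s<1$, the extension of $R_\varphi$ to $H^{s-1}(\R)$ recalled in Section \ref{section_lagrangian}) the operator norms of $f\mapsto f\circ\psi^\delta_\tau$ on $H^{s-1}(\R)$ are bounded by a constant $C_0$, uniformly in $\tau$ and in small $\delta$.

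Write the difference quotient of $w$ as $\tfrac1\delta(w(t+\delta)-w(t))=I_\delta+II_\delta$ with $I_\delta=\tfrac1\delta(g(t+\delta)-g(t))\circ\psi(t+\delta)$ and $II_\delta=\tfrac1\delta\big(g(t)\circ\psi(t+\delta)-g(t)\circ\psi(t)\big)$. The term $I_\delta$ is easy: $\tfrac1\delta(g(t+\delta)-g(t))\to g_t(t)$ in $H^s(\R)$ and $\psi(t+\delta)\to\psi(t)$ in $\Ds^{s+1}(\R)$, so joint continuity of composition $H^s(\R)\times\Ds^{s+1}(\R)\to H^s(\R)$ gives $I_\delta\to g_t(t)\circ\psi(t)$ in $H^s(\R)$, hence in $H^{s-1}(\R)$. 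The content is in $II_\delta$. The key estimate is that for every $\eta\in H^s(\R)$,
\[
\eta\circ\psi(t+\delta)-\eta\circ\psi(t)=\int_0^1\big((\partial_x\eta)\circ\psi^\delta_\tau\big)\cdot\big(\psi(t+\delta)-\psi(t)\big)\,d\tau
\]
in $H^{s-1}(\R)$: this holds for $\eta\in C_c^\infty(\R)$ by the chain rule and fundamental theorem of calculus, and extends to all $\eta\in H^s(\R)$ by density because both sides are continuous linear maps of $\eta$ into $H^{s-1}(\R)$ (on the right, $\eta\mapsto\partial_x\eta$ costs one derivative, while composition with $\psi^\delta_\tau$, multiplication by the fixed element $\psi(t+\delta)-\psi(t)\in H^{s+1}(\R)\subset H^s(\R)$, and integration in $\tau$ are bounded). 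Combining this with the multiplication estimate $H^s(\R)\times H^{s-1}(\R)\to H^{s-1}(\R)$, the uniform bound $C_0$, and the Lipschitz bound yields
\[
\Big\|\tfrac1\delta\big(\eta\circ\psi(t+\delta)-\eta\circ\psi(t)\big)\Big\|_{H^{s-1}}\le C\,C_0\,\tfrac{\|\psi(t+\delta)-\psi(t)\|_{H^s}}{|\delta|}\,\|\eta\|_{H^s}\le C\,C_0\,L\,\|\eta\|_{H^s}
\]
for all small $\delta$.

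Now write $g(t)=1+\phi$ with $\phi\in H^s(\R)$ (the constant drops out of $II_\delta$, and $\partial_x\phi=g_x(t)$), and pick $\phi_n\in C_c^\infty(\R)$ with $\phi_n\to\phi$ in $H^s(\R)$. Decompose
\[
II_\delta-\big((\partial_x\phi)\circ\psi(t)\big)\dot\psi(t)=A^n_\delta+B^n_\delta+C^n,
\]
where $A^n_\delta$ is the difference quotient of $(\phi-\phi_n)\circ\psi$, $B^n_\delta=\tfrac1\delta\big(\phi_n\circ\psi(t+\delta)-\phi_n\circ\psi(t)\big)-\big((\partial_x\phi_n)\circ\psi(t)\big)\dot\psi(t)$, and $C^n=\big((\partial_x\phi_n-\partial_x\phi)\circ\psi(t)\big)\dot\psi(t)$. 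By the displayed uniform estimate, $\|A^n_\delta\|_{H^{s-1}}\le CC_0L\|\phi-\phi_n\|_{H^s}$ for all small $\delta$; by boundedness of $R_{\psi(t)}$ on $H^{s-1}(\R)$ and of multiplication $H^s(\R)\times H^{s-1}(\R)\to H^{s-1}(\R)$, $\|C^n\|_{H^{s-1}}\le C\|\phi-\phi_n\|_{H^s}$; and for each fixed $n$, $B^n_\delta\to0$ in $H^{s-1}(\R)$ as $\delta\to0$, since applying the integral identity to the smooth $\phi_n$ gives $B^n_\delta=\int_0^1\big((\partial_x\phi_n)\circ\psi^\delta_\tau\big)\tfrac1\delta(\psi(t+\delta)-\psi(t))\,d\tau-\big((\partial_x\phi_n)\circ\psi(t)\big)\dot\psi(t)$, with $\psi^\delta_\tau\to\psi(t)$ in $\Ds^{s+1}(\R)$ uniformly in $\tau$ and $\tfrac1\delta(\psi(t+\delta)-\psi(t))\to\dot\psi(t)$ in $H^s(\R)$. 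Hence $\limsup_{\delta\to0}\|II_\delta-((\partial_x\phi)\circ\psi(t))\dot\psi(t)\|_{H^{s-1}}\le C(C_0L+1)\|\phi-\phi_n\|_{H^s}$ for every $n$; letting $n\to\infty$ gives $II_\delta\to\big(g_x(t)\circ\psi(t)\big)\dot\psi(t)$ in $H^{s-1}(\R)$. Together with the limit of $I_\delta$ and the formula for $\dot\psi(t)$, and using $(ab)\circ\psi=(a\circ\psi)(b\circ\psi)$, this shows $w$ is differentiable at $t$ with
\[
\frac{d}{dt}\,g(t)\circ\varphi(t)^{-1}=g_t(t)\circ\varphi(t)^{-1}-\Big(\frac{g_x(t)\,\varphi_t(t)}{\varphi_x(t)}\Big)\circ\varphi(t)^{-1}.
\]
Finally, the right-hand side defines an element of $C([0,T];H^{s-1}(\R))$ — composition with $\varphi^{-1}$, multiplication, and division by $\varphi_x$ being continuous between the relevant Sobolev spaces — while $w\in C([0,T];1+H^{s-1}(\R))$, so $w$ has a continuous derivative and is therefore $C^1$, which proves the lemma. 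The main obstacle is the uniform-in-$\delta$ control of the remainder difference quotient in the last displayed estimate: it is precisely what forces the target regularity down to $H^{s-1}(\R)$, and it relies both on the convexity of $\Ds^{s+1}(\R)$, which makes the integral identity available, and on the fact — supplied by \cite{lagrangian} — that $\varphi^{-1}$ is Lipschitz in time only into $\id+H^s(\R)$.
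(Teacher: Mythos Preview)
Your argument is correct. Both proofs rest on approximation by smoother data, but you implement this differently from the paper. The paper approximates the whole curve $g$ by a sequence $g^{(k)}\in C^1([0,T];U^{s+1})$, uses the Sobolev embedding $H^{s+1}(\R)\hookrightarrow C^1(\R)$ to differentiate $g^{(k)}\circ\varphi^{-1}$ pointwise, writes the result as an integral identity via the fundamental theorem of calculus in $t$, and then passes to the limit $k\to\infty$ in $H^{s-1}(\R)$. You instead fix $t$, split the difference quotient into the piece where $g$ varies and the piece where $\psi=\varphi^{-1}$ varies, and for the latter use an integral representation along the segment joining $\psi(t)$ to $\psi(t+\delta)$ (exploiting the convexity of $\Ds^{s+1}(\R)$) together with approximation of $g(t)-1$ by $C_c^\infty$ functions. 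The paper's route is shorter and avoids having to track uniform-in-$\delta$ bounds, but it presupposes the existence of the approximating curve $g^{(k)}$ and leaves the passage to the limit rather implicit; your route is longer but self-contained, makes explicit why the target regularity drops to $H^{s-1}(\R)$, and isolates exactly where the Lipschitz-in-$H^s$ regularity of $\varphi^{-1}$ enters.
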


\begin{proof}
Take a sequence $(g^{(k)})_{k \geq 1} \subset C^1([0,T];U^{s+1})$ s.t. $g^{(k)} \to g$ in $C^1([0,T];U^s)$ as $k \to \infty$. By the Sobolev imbedding $H^{s+1}(\R) \hookrightarrow C^1(\R)$ we can differentiate $g^{(k)} \circ \varphi^{-1}$ pointwise in $t$
	\[
		\frac{d}{dt} g^{(k)}(t) \circ \varphi(t)^{-1} = g_t^{(k)}(t) \circ \varphi(t)^{-1}-\left(\frac{g_x^{(k)}(t) \varphi_t(t)}{\varphi_x(t)}\right) \circ \varphi(t)^{-1},\;0 \leq t \leq T.
	\]
By the Fundamental lemma of calculus we get pointwise
	\[
		g^{(k)}(t) \circ \varphi(t)^{-1}=g^{(k)}(0) \circ \varphi(0)^{-1} + \int_0^t  g_t^{(k)}(s) \circ \varphi(s)^{-1}-\left(\frac{g_x^{(k)}(s) \varphi_t(s)}{\varphi_x(s)}\right) \circ \varphi(s)^{-1}\;ds.
	\]
But this is an identity in $H^{s-1}(\R)$ as well. Taking $k \to \infty$ shows the claim.
\end{proof}

Let us prove now the local existence of solutions to the Green-Naghdi system \eqref{gn_nonlocal}.

\begin{Lemma}\label{lemma_existence}
	Let $s > 1/2$ and $(h_0,u_0) \in U^s \times H^{s+1}(\R)$. Then there is $T > 0$ and 
\[
	(h,u) \in C([0,T];U^s \times H^{s+1}(\R)) \cap C^1([0,T];(1+H^{s-1}(\R)) \times H^s(\R))
\]
	solving \eqref{gn_nonlocal}. Moreover the dependence of $(h,u)$ on $(h_0,u_0)$ is continuous.
\end{Lemma}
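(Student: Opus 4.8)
The plan is to reduce the local existence statement to an application of the Picard--Lindel\"of theorem for the ODE \eqref{ode} on the Banach manifold $\Ds^{s+1}(\R) \times H^{s+1}(\R)$, combined with the explicit formulas \eqref{h} and $u(t)=\varphi_t(t)\circ\varphi(t)^{-1}$ for recovering $(h,u)$ from the flow $\varphi$. First I would invoke Theorem \ref{th_analytic}: since $(\varphi,v,h_0)\mapsto F(\varphi,v,h_0)$ is real analytic from $\Ds^{s+1}(\R)\times H^{s+1}(\R)\times U^s$ into $H^{s+1}(\R)$, for fixed $h_0\in U^s$ the second-order ODE \eqref{ode} has a locally Lipschitz right-hand side on the open set $\Ds^{s+1}(\R)-\text{id}\subset H^{s+1}(\R)$ (times the $v$-factor). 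By Picard--Lindel\"of there exist $T>0$ and a unique solution $\varphi\in C^2([0,T];\Ds^{s+1}(\R))$ with $\varphi(0)=\text{id}$, $\varphi_t(0)=u_0$; in particular $\varphi\in C^1([0,T];\Ds^{s+1}(\R))$ and $\varphi_t\in C^1([0,T];H^{s+1}(\R))$, so $\varphi_{tt}\in C([0,T];H^{s+1}(\R))$.

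Next I would define $h(t)=(h_0/\varphi_x(t))\circ\varphi(t)^{-1}$ and $u(t)=\varphi_t(t)\circ\varphi(t)^{-1}$ and check the claimed regularity. Since $h_0/\varphi_x(t)\in U^s$ by the analyticity of $(\varphi,h_0)\mapsto M_{\varphi_x}^{-1}h_0$ established in Section \ref{section_lagrangian}, and composition with $\varphi(t)^{-1}$ preserves $U^s$ (using $\Ds^{s+1}(\R)\hookrightarrow\Ds^s(\R)$ and the composition results of \cite{composition}), we get $h\in C([0,T];U^s)$; continuity in $t$ follows from continuity of composition together with $t\mapsto\varphi(t)^{-1}\in C([0,T];\Ds^{s+1}(\R))$, which in turn comes from continuity of inversion. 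Similarly $u\in C([0,T];H^{s+1}(\R))$. For the $C^1$ statements I would apply Lemma \ref{lemma_technical} with $g(t)=h_0/\varphi_x(t)\in C^1([0,T];U^s)$ to obtain $h\in C^1([0,T];1+H^{s-1}(\R))$, and an analogous computation (differentiating $\varphi_t\circ\varphi^{-1}$, which loses one derivative because of the spatial derivative hitting $\varphi_t\in H^{s+1}$) to get $u\in C^1([0,T];H^s(\R))$.

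The substantive step is verifying that $(h,u)$ so defined actually solves \eqref{gn_nonlocal}. The second equation is immediate: differentiating $\varphi_x(t)\cdot h(t)\circ\varphi(t)$ and using Lemma \ref{lemma_technical}-type manipulations gives $h_t+\partial_x(hu)=0$, exactly as sketched in the introduction. For the first equation I would compute $\frac{d}{dt}\varphi_t(t)=\frac{d}{dt}(u(t)\circ\varphi(t))=(u_t(t)+u(t)u_x(t))\circ\varphi(t)$ pointwise (valid by the Sobolev embedding into $C^1$), hence $u_t+uu_x=\varphi_{tt}\circ\varphi^{-1}=F(\varphi,\varphi_t,h_0)\circ\varphi^{-1}$; then I must unwind the definition of $F$ and, using $h(t)=(h_0/\varphi_x(t))\circ\varphi(t)^{-1}$, the conjugation identity $R_\varphi\partial_x R_\varphi^{-1}=M_{\varphi_x}^{-1}\partial_x$ from Lemma \ref{lemma_conjugation}, and $R_\varphi A_{h}^{-1}R_\varphi^{-1}=(R_\varphi A_h R_\varphi^{-1})^{-1}$, recognize the right-hand side as $-A_h^{-1}(3hh_x+2\partial_x(h^3u_x^2))$. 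This bookkeeping is where one must be careful that every operator is applied on the correct Sobolev scale ($A_h:H^{s+1}\to H^{s-1}$ from Lemma \ref{lemma_s1}, so the identity holds in $H^{s-1}(\R)$), and it is the main obstacle — not deep, but the most error-prone part.

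Finally, continuous dependence of $(h,u)$ on $(h_0,u_0)$ follows from continuous dependence of the Picard--Lindel\"of solution $\varphi$ on the initial data $(\text{id},u_0)$ and the parameter $h_0$ (a standard consequence of the analyticity, hence local Lipschitz continuity, of $F$ jointly in all arguments), combined with the continuity of the maps $\varphi\mapsto\varphi^{-1}$, composition $(f,\varphi)\mapsto f\circ\varphi$, and $(\varphi,h_0)\mapsto M_{\varphi_x}^{-1}h_0$ used to build $(h,u)$ from $\varphi$; one also uses that the existence time $T$ can be chosen uniform on a neighbourhood of $(h_0,u_0)$, again a standard feature of the Picard iteration with locally Lipschitz vector field.
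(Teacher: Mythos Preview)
Your proposal is correct and follows essentially the same route as the paper: solve \eqref{ode} by Picard--Lindel\"of via Theorem \ref{th_analytic}, define $(h,u)$ from $\varphi$ by the usual formulas, use Lemma \ref{lemma_technical} and the composition/inversion results to obtain the stated regularity, unwind $F$ to recover the first equation of \eqref{gn_nonlocal}, and read off continuous dependence from the ODE theory together with continuity of composition. The paper's own proof is terser but identical in structure; the only cosmetic difference is that it records the identity $u_t+uu_x=-A_h^{-1}(\cdots)$ as holding in $H^s(\R)$ rather than $H^{s-1}(\R)$, which is a slightly sharper (and equally easy) statement.
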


We can take a uniform $T > 0$ in a neighborhood of $(h_0,u_0)$. Continuous dependence on $(h_0,u_0)$ means continuity in such a neighborhood with the same $T > 0$.

\begin{proof}[Proof of Lemma \ref{lemma_existence}]
	Using Theorem \ref{th_analytic} and the Picard-Lindel\"of Theorem we get a solution $\varphi \in C^\infty([0,T];\Ds^{s+1}(\R))$ to
	\[
		\varphi_{tt}=F(\varphi,\varphi_t,h_0),\;\varphi(0)=\text{id},\;\varphi_t(0)=u_0,
	\]
on some time interval $[0,T]$ for some $T > 0$. For initial data in a neighborhood of $(h_0,u_0)$ we can take the same $T$. We define
	\[
		h(t):=\left(\frac{h_0}{\varphi_x(t)}\right) \circ \varphi(t)^{-1},\;u(t):=\varphi_t(t) \circ \varphi(t)^{-1},\; 0 \leq t \leq T.
	\]
So by the continuity properties of the composition map we see that
	\[
		(h,u) \in C([0,T];U^s \times H^{s+1}(\R)).
	\]
By the Sobolev imbedding $H^{s+1}(\R) \hookrightarrow C^1(\R)$ we know that $u \in C^1([0,T] \times \R)$. Taking pointwise the $t$ derivative in $u \circ \varphi$ gives
	\[
		\varphi_{tt}=\frac{d}{dt}u \circ \varphi = (u_t+uu_x)\circ \varphi = F(\varphi,\varphi_t,u_0). 
	\]
Entangling the last equality leads to the pointwise identity
	\[
		u_t + uu_x = -A_h^{-1}(3h h_x+2 \partial_x (h^3 u_x)).
	\]
	But this is an identity in $H^s$ as well since $U^s \to L(H^{s-1}(\R);H^{s+1}(\R)),\;h \mapsto A_h^{-1}$ is continuous. Thus we have
	\[
		u \in C^1([0,T];H^s(\R)).
	\]
	and the first equation in \eqref{gn_nonlocal} is satisfied. Using Lemma \ref{lemma_technical} one gets that
\[
	h=\left(\frac{h_0}{\varphi_x}\right) \circ \varphi^{-1} \in C^1([0,T];1+H^{s-1}(\R))
\]
	and the second equation in \eqref{gn_nonlocal} is satisfied. Continuous dependence on the initial data follows from the continuity properties of the composition map. This finishes the proof.
\end{proof}

Now we show uniqueness of solutions to \eqref{gn_nonlocal}.

\begin{Lemma}\label{lemma_uniqueness}
Let $s > 1/2$ and $(h_0,u_0) \in U^s \times H^{s+1}(\R)$. Suppose that
	\[
		(h,u),(\tilde h,\tilde u) \in C([0,T];U^s \times H^{s+1}(\R)) \cap C^1([0,T];(1+H^{s-1}(\R)) \times H^s(\R))
	\]
	are solutions to \eqref{gn_nonlocal} on $[0,T]$ for some $T > 0$. Then $(h,u) \equiv (\tilde h,\tilde u)$ on $[0,T]$.
\end{Lemma}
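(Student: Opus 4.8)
The natural strategy is to pass to the Lagrangian side, where uniqueness is free from the Picard-Lindelöf theorem, and then transport it back. The plan is as follows. Suppose $(h,u)$ and $(\tilde h,\tilde u)$ are two solutions as in the statement. First I would associate to each of them its flow map: since $u,\tilde u \in C([0,T];H^{s+1}(\R))$ and $H^{s+1}(\R)\hookrightarrow C^1(\R)$, the ODE $\varphi_t(t) = u(t)\circ\varphi(t)$, $\varphi(0)=\mathrm{id}$, has a unique solution, and by the result of \cite{lagrangian} quoted in the introduction this solution lies in $C^1([0,T];\Ds^{s+1}(\R))$; similarly one gets $\tilde\varphi \in C^1([0,T];\Ds^{s+1}(\R))$.

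Next I would verify that each flow map solves the second-order ODE \eqref{ode}. The key algebraic point — already carried out inside the proof of Lemma \ref{lemma_existence} — is that because $(h,u)$ solves \eqref{gn_nonlocal}, differentiating $\varphi_t = u\circ\varphi$ pointwise in $t$ and substituting $u_t+uu_x = -A_h^{-1}(3hh_x + 2\partial_x(h^3u_x^2))$ together with $h = (h_0/\varphi_x)\circ\varphi^{-1}$ (this last identity coming from Lemma \ref{lemma_technical}, since $\tfrac{d}{dt}(\varphi_x\cdot h\circ\varphi)=0$) yields exactly $\varphi_{tt} = F(\varphi,\varphi_t,h_0)$. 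The same applies to $\tilde\varphi$. Here one must check that all these manipulations, done pointwise thanks to the $C^1$ regularity, are valid as identities in the relevant Sobolev spaces, which is where the technical Lemma \ref{lemma_technical} and the continuity of $h\mapsto A_h^{-1}$ are used; these are already in hand. Since $\varphi(0)=\tilde\varphi(0)=\mathrm{id}$ and $\varphi_t(0)=\tilde\varphi_t(0)=u_0$, and $F$ is real analytic (Theorem \ref{th_analytic}), the Picard-Lindelöf uniqueness statement gives $\varphi \equiv \tilde\varphi$ on $[0,T]$.

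Finally I would read off the conclusion: from $\varphi\equiv\tilde\varphi$ we get $u(t) = \varphi_t(t)\circ\varphi(t)^{-1} = \tilde\varphi_t(t)\circ\tilde\varphi(t)^{-1} = \tilde u(t)$ for all $t$, and $h(t) = (h_0/\varphi_x(t))\circ\varphi(t)^{-1} = (h_0/\tilde\varphi_x(t))\circ\tilde\varphi(t)^{-1} = \tilde h(t)$, so $(h,u)\equiv(\tilde h,\tilde u)$ on $[0,T]$.

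The one genuinely delicate point — the rest being bookkeeping — is justifying that the flow maps built from the two given solutions actually satisfy \eqref{ode}, i.e. that the pointwise computation relating $\varphi_{tt}$ to $F$ upgrades to an identity in the function spaces in which \eqref{ode} is posed. This requires knowing a priori that $\varphi \in C^1([0,T];\Ds^{s+1}(\R))$ (so that $\varphi_t\circ\varphi^{-1}$ and $(h_0/\varphi_x)\circ\varphi^{-1}$ have the claimed regularity and the substitutions make sense), which is supplied by \cite{lagrangian}, together with the continuity of composition and of $h\mapsto A_h^{-1}$; once $\varphi_{tt}=F(\varphi,\varphi_t,h_0)$ is established as an identity in $H^{s+1}(\R)$, the analytic ODE has a unique solution and everything else follows mechanically.
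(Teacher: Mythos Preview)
Your proposal is correct and follows essentially the same route as the paper's proof: construct the flow maps $\varphi,\tilde\varphi \in C^1([0,T];\Ds^{s+1}(\R))$ via \cite{lagrangian}, show each satisfies the second-order ODE \eqref{ode}, and invoke ODE uniqueness. You are in fact slightly more careful than the paper in spelling out why $h=(h_0/\varphi_x)\circ\varphi^{-1}$ holds for a given solution (via Lemma \ref{lemma_technical} and the second equation of \eqref{gn_nonlocal}), a step the paper leaves implicit when asserting that $\varphi$ solves \eqref{ode}.
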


\begin{proof}
From \cite{lagrangian} there are $\varphi,\tilde \varphi \in C^1([0,T];\Ds^{s+1}(\R))$ satisfying
	\[
		\varphi_t=u \circ \varphi,\tilde \varphi_t=\tilde u \circ \tilde \varphi,\; 0 \leq t \leq T,\; \varphi(0)=\tilde \varphi(0)=\text{id}.
	\]
Taking the pointwise $t$ derivative in $u \circ \varphi$ gives
	\[
		\varphi_{tt}=(u + uu_x) \circ \varphi=-R_\varphi A_h^{-1}(3hh_x+2 \partial_x(h^3 u_x)),
	\]
where in the last equality we used the first equation in \eqref{gn_nonlocal}. But this is an identity in $H^{s+1}(\R)$ as well. So $\varphi$ solves the ODE \eqref{ode} on $[0,T]$. A similar argument shows that $\tilde \varphi$ solves the same initial value problem on $[0,T]$. Thus by uniqueness of solutions to ODEs we get $\varphi \equiv \tilde \varphi$ on $[0,T]$, which implies $(h,u) \equiv (\tilde h,\tilde u)$ on $[0,T]$. This finishes the proof.
\end{proof}

By combinining Lemma \ref{lemma_existence} and Lemma \ref{lemma_uniqueness} we can prove Theorem \ref{th_lwp}.

\begin{proof}[Proof of Theorem \ref{th_lwp}]
The proof follows from Lemma \ref{lemma_existence} and Lemma \ref{lemma_uniqueness}.
\end{proof}

\bibliographystyle{plain}

\begin{thebibliography}{999}



	


		

	\bibitem{gn} Green, A.; Naghdi, P., {\em A derivation of equations for wave propagation in water of variable depth.}, J. Fluid Mech. 78 (1976), 237–-246.


	\bibitem{composition}   Inci, H.; Kappeler, T.; Topalov, P, {\em On the regularity of the composition of diffeomorphisms}, Mem. Amer., Math. Soc. 226 (2013), no. 1062.

\bibitem{lagrangian} Inci, H., {\em On a Lagrangian formulation of the incompressible Euler equation}, J. Partial Differ. Equ. 29 (2016), no. 4, 320--359.





\bibitem{lwp}  Li, Yi A., {\em A shallow-water approximation to the full water wave problem.}, Comm. Pure Appl. Math. 59 (2006), no. 9, 1225–-1285.


	



	\bibitem{serre} Serre F., {\em Contribution \`a l’\'etude des \'ecoulements permanents et variables dans les canaux.}, La Houille Blanche 3 (1953), 374–-388, and 6 (1953), 830–-872.
	\bibitem{gardner}  Su, C. H.; Gardner, C. S., {\em Korteweg-de Vries equation and generalizations. III. Derivation of the Korteweg-de Vries equation and Burgers equation.}, J. Mathematical Phys. 10 (1969), 536–-539.

\end{thebibliography}

\flushleft
\author{ Hasan \.{I}nci\\
Department of Mathematics, Ko\c{c} University\\
Rumelifeneri Yolu\\
34450 Sar{\i}yer \.{I}stanbul T\"urkiye\\
        {\it email: } {hinci@ku.edu.tr}
}

\end{document}